









\documentclass[a4paper]{article}
\usepackage[affil-it]{authblk}
\usepackage{amsmath}
\usepackage{amsthm}
\usepackage{amssymb}
\usepackage{graphicx}
\usepackage{comment}
\usepackage{color}


\addtolength{\hoffset}{-1.2cm}
\addtolength{\textwidth}{1.8cm}
\addtolength{\voffset}{-2.2cm}
\addtolength{\textheight}{3.5cm}

\setlength{\parindent}{0mm}
\setlength{\parskip}{2ex plus 0.5ex minus 0.2ex}

\newtheorem*{theorem}{\bf Theorem}
\newtheorem*{lemma}{\bf Lemma}

\providecommand{\keywords}[1]{\textbf{\textit{Keywords and phrases---}} #1}

\begin{document}
\title{Unifying Dynamical and Structural \\ Stability of Equilibriums}
\date{\vspace{-5ex}}

\author{Jean-Fran\c{c}ois Arnoldi\thanks{Electronic address: \texttt{jean-francois.arnoldi@sete.cnrs.fr}; Corresponding author} $\;$and Bart Haegeman}
\affil{Center for Biodiversity Theory and Modeling, Station d'\'Ecologie Th\'eorique et Exp\'erimentale, CNRS and Universit\'e Paul Sabatier,  09200 Moulis, France}

\maketitle

\begin{abstract}
We exhibit a fundamental relationship between measures of dynamical and structural stability of equilibriums, arising from real dynamical systems.  We show that dynamical stability, quantified via systems local response to external perturbations, coincides with the minimal internal perturbation able to destabilize the equilibrium.  
First, by reformulating a result of control theory, we explain that harmonic external perturbations reflect the spectral sensitivity of the Jacobian matrix at the equilibrium, with respect to constant changes of its coefficients.  However, for this equivalence to hold, imaginary changes of the Jacobian's coefficients have to be allowed.  The connection with dynamical stability is thus lost for real dynamical systems.  
We show that this issue can be avoided, thus recovering the fundamental link between dynamical and structural stability, by considering stochastic noise as external and internal perturbations.  More precisely, we demonstrate that a system's local response to white-noise perturbations directly reflects the intensity of internal white noise that it can accommodate before asymptotic mean-square stability of the equilibrium is lost. 
\end{abstract}
\keywords{linear systems, non-normal matrices, external perturbations, internal perturbations, stability radius, white-noise perturbations}
\maketitle

\section{Prologue} 

Understanding the stability of dynamical systems is fundamental in numerous applications, from classical mechanics, fluid dynamics, engineering, to biology \cite{schmid_stability_2012, chandrasekhar_hydrodynamic_1970, penzien_dynamics_1975, machowski_power_2011, steinfeld_chemical_1999, may_stability_1973}.  Stability refers to the ability of a subset in phase space to attract trajectories from its vicinity. In otherwords, a given state is dynamically stable if trajectories remains close to that state despite small perturbations. Such sets are called attractors, the most basic kind being equilibriums. Regardless of their conceptual simplicity, they commonly appear in a large variety of models.  For instance, their study in ecology is fundamental to understand the mechanisms  that stabilize ecosystems and support their staggering diversity \cite{may_stability_1973}.  In fluid mechanics, the laminar state can be seen as an equilibrium, and the transition to turbulence as a loss of local stability \cite{schmid_stability_2012}.  In the context of dynamics of nodes of electric power grids, the equilibrium is a desired state where the generator operates in synchrony with the grid \cite{machowski_power_2011}.

For equilibriums, the necessary and sufficient condition for stability is derived in terms of the spectrum of the associated Jacobian matrix.  If the real parts of its eigenvalues are negative then the equilibrium is locally asymptotically stable.  This local stability analysis has been fruitfully applied across disciplines.

\noindent On the other hand, structural stability relates to the robustness of the qualitative dynamical picture with respect to small changes in the system structure \cite{thom_structural_1989, katok_introduction_1997}.  This notion is particularly important when the system is a simplified model of a more complicated real-world system, which is virtually always the case in applications.  For the model to inform on the real-world system it must be robust with respect to small perturbations, uncertainties and so forth.  There are broad classes of models that are known to be structurally stable, the most basic ones being hyperbolic linear systems, a result that justifies the study of linearized models in the vicinity of equilibriums. The two stability notions described above are qualitative, yet it is often important to quantify stability, either dynamical or structural, in order to compare different models or to assess the effect of parameters on stability.  Qualitative notions answer the question \emph{is a particular state (or model) stable?}, while quantitative measures answer \emph{how stable is this state (or model)?}  Dynamical stability is typically quantified via the system response to pulse-perturbations, that is, instantaneous displacements in phase space, but other perturbations are also important. For instance, periodic forcing can reveal resonances.  Although far less common in the literature \cite{rohr_structural_2014, grilli_geometry_2015}, measures of structural stability are by no means less relevant.  They quantify the stability of the system itself, that is, the intensity of structural perturbations that it can accommodate before its behavior qualitatively changes.

\noindent In this article we focus on equilibriums, arising from real dynamical systems.  We introduce natural measures of dynamical stability, quantifying systems local response to persistent forcing.   We compare them to natural measures of structural stability, quantifying the smallest change in the local dynamical structure leading to destabilization.  We show that these measures coincide with one another, so that dynamical response to external perturbations can inform on systems sensitivity to structural changes. 

\noindent In the first section we revisit a result of control theory, showing that harmonic external perturbations reflect the spectral sensitivity of the Jacobian matrix at the equilibrium, with respect to constant changes of its coefficients.  On an elementary example, we illustrate a caveat of this approach, showing that this relationship does not hold in the context of real dynamical systems.  In the following section, in which our main result is stated, we demonstrate that the fundamental link between dynamical and structural stability can be recovered by considering stochastic noise as external and internal perturbations.

\section{Dynamical and structural stability: Harmonic perturbations} 

The standard procedure to assess stability of an equilibrium consists in linearizing the vector field in its vicinity, effectively reducing the local dynamics to a linear system of the form $\dot{\boldsymbol{x}}=A\boldsymbol{x}$, where $A$ is the Jacobian matrix evaluated at the equilibrium.  Defining the spectral abscissa of $A$ as 
\begin{equation}
 \alpha\left(A\right) =
 \sup\left\{ \Re(\lambda)\;|\;\lambda\in\mbox{spect}\left(A\right)\right\},
\end{equation}
we say that a matrix with negative spectral abscissa is stable (i.e., the associated equilibrium is stable) and unstable otherwise. 

A straightforward way to quantify the dynamical stability of a stable equilibrium is to analyze the system's local response to harmonic forcing. This amounts to solving
\[
 \dot{\boldsymbol{x}}=A\boldsymbol{x}+\Re(e^{i\omega t}\boldsymbol{u}),
\]
where $\omega\in\mathbb{R}$ is the frequency of a real rotating perturbation.  The stationary response is $\Re\big(e^{i\omega t}\boldsymbol{w}\big)$ with $\boldsymbol{w} = \left(i\omega-A\right)^{-1}\boldsymbol{u}$.  The norm of the complex vector $\boldsymbol{w}$ is the mean amplitude of the induced oscillations.  The spectral norm of the matrix $\left(i\omega-A\right)^{-1}$ gives the strongest system response to harmonic forcing of frequency $\omega$.  To define a measure of stability, we take the inverse of the largest system amplification of harmonic forcing. This translates as
\begin{equation}
 \mathcal{S}_{_{\mathrm{DYN}}}^{\mathfrak{h}}(A)
 = 1/\sup_{\omega\in\mathbb{R}}\big\Vert (i\omega-A)^{-1}\big\Vert.
 \label{eq:variability-1}
\end{equation}
The number $\omega$ realizing the maximum is called the resonant frequency.  It can be shown \cite{arnoldi_resilience_2016} that $\mathcal{S}_{_{\mathrm{DYN}}}^{\mathfrak{h}}$ relates to the maximal power gain over wide-sense stationary signals, indicating that, although defined with respect to a specific class of forcing, it is a general indicator of the ability of an equilibrium to absorb external perturbations. 

Let us now turn to the problem of quantifying structural stability.  For equilibriums we may consider how close the Jacobian matrix $A$ is from being unstable, that is, the minimal constant change in its coefficients that can push its dominant eigenvalue into the instability region $\{ z\in \mathbb{C} | \Re(z) \geq 0 \}$ of the complex plane.   Measuring the distance to instability as the spectral norm of the smallest matrix $P$ rendering $A+P$ unstable, this yields
\begin{equation}
 \mathcal{S}_{_{\mathrm{STR}}}^c(A)
 = \inf\left\{ \Vert P\Vert\;|\;\alpha(A+P)>0\right\}
 \leq |\alpha(A)| \}. \label{eq:r_C}
\end{equation}
This definition of structural stability is also known as the stability radius of $A$ \cite{hinrichsen_stability_1986}.  The inequality in (\ref{eq:r_C}) comes from the fact that the perturbation $P=-\alpha(A)\,\mathbb{I}$ is always sufficient to destabilize $A$.  In fact, it is the most efficient way to destabilize $A$ when $A$ is normal (i.e. has orthonormal eigenvectors), in which case the inequality is an equality \cite{trefethen_spectra_2005}. Note that the spectral absissa $|\alpha(A)|$ is the Euclidian distance in the complex plane to instability. Hence the two distances $-$stability radius and spectral absissa$-$ coincide when the jacobian matrix is normal.

\noindent There is a strong link between $\mathcal{S}_{_{\mathrm{STR}}}^c$ and the dynamical measure $\mathcal{S}_{_{\mathrm{DYN}}}^{\mathfrak{h}}$ introduced in (\ref{eq:variability-1}).  To reveal this link suppose that for some stable matrix $A$, $\mathcal{S}_{_{\mathrm{DYN}}}^{\mathfrak{h}}(A) = 1/v$ where $v>0$ is the strongest response associated to the resonance $\omega$.  Pick two normalized vectors: $\boldsymbol{u}$, spanning the direction of perturbation, and $\boldsymbol{w}$, spanning the direction of response, both associated to the resonance $\omega$.  We have that
\[
 (i\omega-A)^{-1}\boldsymbol{u} = v\boldsymbol{w}
 \ \Leftrightarrow\ 
 A\boldsymbol{w}+v^{-1}\boldsymbol{u} = i\omega\boldsymbol{w}.
\]
We can construct a destabilizing matrix from the vectors $\boldsymbol{u}$ and $\boldsymbol{w}$.  This is done by choosing $P = 
v^{-1}\boldsymbol{u}\boldsymbol{w}^{*}$, so that $\left\Vert P \right\Vert = v^{-1}$, $P\boldsymbol{w} = v^{-1}\boldsymbol{u}$ and 
\[
 \left(A+P\right)\boldsymbol{w} = i\omega\boldsymbol{w}
 \ \Rightarrow\ 
 \alpha\left(A+P\right) = 0.
\]
Hence, $P$ destabilizes $A$, meaning that $\mathcal{S}_{_{\mathrm{STR}}}^c(A) \leq \left\Vert P\right\Vert 
= v^{-1}= \mathcal{S}_{_{\mathrm{DYN}}}^{\mathfrak{h}}(A)$. 

\noindent Conversely, suppose that $\mathcal{S}_{_{\mathrm{STR}}}^c(A) = p$.  There exists a matrix $P$ with $\left\Vert P\right\Vert = p$ such that $A+P$ is unstable: for some $\omega$ and normalized vector $\boldsymbol{w}$,
\[
 \left(A+P\right)\boldsymbol{w} = i\omega\boldsymbol{w}
 \ \Leftrightarrow\ 
 \boldsymbol{w} = \left(i\omega-A\right)^{-1}\boldsymbol{u},
\]
with $\boldsymbol{u} = P\boldsymbol{w}$.  Since $\left\Vert\boldsymbol{u}\right\Vert \leq p$ we deduce that $\Vert(i\omega-A)^{-1}\Vert \geq p^{-1}$.  Hence,
\begin{equation}
 \mathcal{S}_{_{\mathrm{DYN}}}^{\mathfrak{h}}
 = \mathcal{S}_{_{\mathrm{STR}}}^c
 \label{eq:formula_rC}
\end{equation}
giving from (\ref{eq:variability-1}) a computable expression for structural stability.  Equation~(\ref{eq:formula_rC}) corresponds to a well known result in control theory \cite{hinrichsen_stability_1986}, which we interpret here in terms of dynamical and structural stability of equilibriums. 

\noindent There is however a caveat. The quantitative measure of structural stability we have considered allows for complex matrix perturbations, that almost never make sense in applications.  In fact, computing the corresponding real structural stability, which we denote as $\mathcal{S}_{_{\mathrm{STR}}}^{\Re(c)}$, involves a complicated global optimization problem \cite{qiu_formula_1995}.  In general dynamical stability can be much smaller than its real structural counterpart.  This issue is particularly apparent in the following elementary example.  Consider the sequence of Jacobian matrices,
\begin{equation}
 A = \left(\begin{array}{cc}
 -1 & M^{2} \\ -1 & -1
 \end{array}\right)
 \quad \text{with }\ M=1,2,\ldots,
 \label{eq:example}
\end{equation}
whose eigenvalues are $-1\pm iM$, so that $\alpha(A)=-1$.  The associated equilibriums are stable for all values of $M$.  The strongest response to harmonic forcing grows with $M$. Also, complex perturbations have an effect of order $M$ on the real part of the spectrum, so that perturbations of order $M^{-1}$ can destabilize the matrix. 

\noindent This is not true for real perturbations as,
\[
 1 = \mathcal{S}_{_{\mathrm{STR}}}^{\Re(c)}(A)
   > \mathcal{S}_{_{\mathrm{STR}}}^c(A)
   = \mathcal{S}_{_{\mathrm{DYN}}}^{\mathfrak{h}}(A)
   \rightarrow_{M\rightarrow\infty} 0.
\]
Real structural stability can thus be completely disconnected from its dynamical counterpart.

\section{Dynamical and structural stability: White-noise perturbations} 

Let us now transpose the relationship between dynamical and structural stability to white-noise forcing, often used to model the effect of erratic external perturbations \cite{arnoldi_resilience_2016, buckwar_asymptotic_2014}.  White noise acts locally as
\begin{equation}
 d\boldsymbol{X}_{t} = A\boldsymbol{X}_{t}dt+Td\boldsymbol{W}_{t},
\end{equation}
where $\boldsymbol{W}_{t}$ is a vector of independent Wiener processes, representing various external factors acting on the system, with the matrix $T$ describing how these factors affect system variables.  The first moments $\boldsymbol{\mu}_t = \mathbb{E}\boldsymbol{X}_t$ evolve as $\dot{\boldsymbol{\mu}}_t = A\boldsymbol{\mu}_t$ and converge to zero if $A$ is stable.  The second moments, represented as covariance matrices $C_t = \mathbb{E}\boldsymbol{X}_t\boldsymbol{X}_t^{\top}$, follow \cite{arnold_stochastic_1976,kampen_stochastic_1997}
\begin{equation}
 \dot{C}_{t} = \widehat{A}C_{t} + \Sigma,
 \label{eq:lifted-dynamics}
\end{equation}
with $\widehat{A}C = AC + CA^{\top}$, called hereafter the lifted operator, and $\Sigma = TT^{\top}$, a positive semi-definite matrix, encoding the effective correlations of the noise.  If $A$ is stable, any initial covariance matrix converges to
\[
 C_{*} = -\widehat{A}^{-1}\Sigma,
\]
the unique attractor of (\ref{eq:lifted-dynamics}). 

\noindent In analogy with the measure $\mathcal{S}_{_{\mathrm{DYN}}}^{\mathfrak{h}}$ constructed via the largest local response to normalized harmonic perturbations, we define a measure of dynamical stability  by taking the inverse of the strongest system response over normalized white-noise perturbations. This leads us to
\begin{equation}
 \mathcal{S}_{\mathrm{_{DYN}}}^{\mathrm{w}}(A)
 = 1 / \sup_{\Sigma \geq 0,\,\Vert\Sigma\Vert_\text{F}=1}
 \big\Vert \widehat{A}^{-1}\Sigma \big\Vert_{\mathrm{F}},
 \label{eq:stochastic_variability}
\end{equation}
where the supremum is taken over covariance matrices of the real external noise.  The use of the Frobenius norm, $\Vert\Sigma\Vert_{\mathrm{F}} = \text{Tr}\big(\Sigma^{\top}\Sigma\big)^{\frac{1}{2}}$, to normalize the correlation matrices allows us to see them as vectors endowed with the usual scalar product and Euclidean norm.  Because $-\widehat{A}^{-1}$ is a completely positive map, the matrix $\Sigma$ realizing the norm $\Vert\widehat{A}^{-1}\Sigma\Vert_{\mathrm{F}} = \Vert\widehat{A}^{-1}\Vert$ is a positive semi-definite matrix \cite{watrous_notes_2005}.  We thus get that 
\begin{equation}
 \mathcal{S}_{\mathrm{_{DYN}}}^{\mathrm{w}}(A)
 = 1/\big\Vert \widehat{A}^{-1} \big\Vert.
 \label{eq:V_S}
\end{equation}
Note that the lifted operator $\widehat{A}$ can be expressed as a larger matrix $A\otimes\mathbb{I} + \mathbb{I}\otimes A$, giving a simple way to compute $\mathcal{S}_{\mathrm{_{DYN}}}^{\mathrm{w}}$. 

\noindent So far, as in control theory, we considered constant changes in the Jacobian matrix to quantify structural stability.  We now embark on a different path, assuming that the coefficients of the Jacobian matrix fluctuate.  In time-series analysis such variations are called process errors, while the ones previously considered would correspond to observation errors \cite{chatfield_analysis_1989}.  To model the effect of internal perturbations, we pick a set of real matrices $P_{k}$ and independent Wiener processes $W_{t}^{k}$, and consider
\begin{equation}
 d\boldsymbol{X}_{t} = \Big( Adt+\sum_k P_{k}\,dW_{t}^{k} \Big) \boldsymbol{X}_{t}.
 \label{eq:homogeneous_SDE}
\end{equation}
The matrices $P_{k}$ describe the fluctuations of the matrix entries $A_{ij}$ and their correlations.  For example, independent fluctuations of variance $\sigma^2$ of all entries $A_{ij}$ would correspond to $P_k=\sigma \boldsymbol{e}_i\boldsymbol{e}_j^{\top}$ where $\{\boldsymbol{e}_i\}$ stands for the standard orthonormal basis.  Note that the representation of multiplicative noise in (\ref{eq:homogeneous_SDE}) corresponds to It\^o's interpretation of stochasticity \cite{kampen_stochastic_1997}.  We will discuss this point further in the 
Epilogue.
The first moments follow the same dynamics as before, $\dot{\boldsymbol{\mu}}_t = A\boldsymbol{\mu}_t$, and converge to zero if $A$ is stable.  As before, equation~(\ref{eq:homogeneous_SDE}) can be lifted to act on covariance matrices as \cite{arnold_stochastic_1976,kampen_stochastic_1997}
\begin{equation}
 \dot{C_{t}} = \big(\widehat{A}+\mathcal{P}\big) C_{t},
 \label{eq:lifted_hom_sde}
\end{equation}
with $\mathcal{P}(C) = \sum_{k}P_{k} C P_{k}^{\top}$.  We measure the intensity of the internal perturbation by the spectral norm $\Vert\mathcal{P}\Vert$.  In the case of independent fluctuations of all entries of $A$, $\Vert\mathcal{P}\Vert = n^2\sigma^2$, with $n$ the system dimension.  We can then define stochastic structural stability as the minimal internal perturbation intensity able to destabilize the second moments of (\ref{eq:homogeneous_SDE}),
\begin{equation}
 \mathcal{S}_{\mathrm{_{STR}}}^{\mathrm{w}}(A)
 = \inf\{ \left\Vert \mathcal{P} \right\Vert 
 \;|\; \alpha\big(\widehat{A}+\mathcal{P}\big) > 0 \},
 \label{eq:r_S}
\end{equation}
where the infimum is over perturbations $\mathcal{P}$ constructed from an arbitrary sequence of real matrices $P_{k}$. Note that an arbitrary small internal perturbation acting as the multiplicative noise in (\ref{eq:homogeneous_SDE}) can destabilize moments of high enough order. This is apparent in the one-dimension case $dX=(-adt+pdW_t)X$ whose nth-order moments $\mu_n=\mathbb{E}X^n$ satisfy $\dot{\mu}_n=n(-a+p^2(n-1)/2)\mu_n$. Hence, as soon as $p\neq0$,  moments of order $n \geq 2a/p^2+1$ diverge as time flows forward. However, as long as the second moments are bounded, Chebyshev's inequality, that plays a pivotal role in the theory of persistence \cite{schreiber_persistence_2012}, provides a control on the probability of excursions away from equilibrium. In our context, if $C_0$ is the covariance matrix of initial conditions, Chebyshev's inequality reads
\begin{equation}
\mathbb{P}(|X_i(t)|\geq \delta, \; \mathrm{for \; some}\;1\leq i \leq n)   \leq \frac{1}{\delta^2}\left\Vert e^{t(\widehat{A}+\mathcal{P})}C_0 \right\Vert_\mathrm{Tr} 
\end{equation}
which, by virtue of (\ref{eq:lifted_hom_sde}) and equivalence of Trace and Frobenius norms, goes to zero as time flows foward if $\widehat{A}+\mathcal{P}$ is stable $-$regardless of possible divergent moments of higher order. In other words, loosing stability in the sense of (\ref{eq:r_S}) implies loosing control on the probability of excursions from the equilibrium. The importance of this kind of probabilistic stability, called \emph{mean-square asymptotic stability}, is discussed in  \cite{buckwar_asymptotic_2014}, with examples from ecology, turbulent fluid mechanics, and system control. 

With these definitions in hand, and probabilistic interpretation in mind, we can now state our main result, in the form of the following:

\begin{theorem}
For stable equilibriums of real dynamical systems, local measures of structural and dynamical stability coincide, in the sense that
\begin{equation}
 \mathcal{S}_{\mathrm{_{STR}}}^{\mathrm{w}}
 = \mathcal{S}_{\mathrm{_{DYN}}}^{\mathrm{w}}.
 \label{eq:r_S_vs_V_S}
\end{equation}
Here dynamical stability is quantified as the inverse of the maximal system response to external white-noise perturbation, and structural stability as the minimal internal white-noise perturbation needed for the equilibrium to loose mean-square asymptotic stability.  
The latter relates to $\mathcal{S}_{\mathrm{_{STR}}}^{c}$ (resp.  $\mathcal{S}_{\mathrm{_{STR}}}^{\Re(c)}$) $-$the minimal complex (resp. real) constant perturbation of the Jacobian matrix at the equilibrium able to render the latter unstable$-$ via the following inequality 
\begin{equation}
 \mathcal{S}_{\mathrm{_{STR}}}^{\mathrm{w}}
 \leq 2 \mathcal{S}_{\mathrm{_{STR}}}^c 
 \leq 2 \mathcal{S}_{\mathrm{_{STR}}}^{\Re(c)} 
 \leq  2|\alpha| \label{eq:main-result}
\end{equation}
whith $\alpha$ the spectral abscissa of the Jacobian matrix at the equilibrium. (\ref{eq:main-result}) is an equality when the Jacobian at the equilibrium is a normal matrix. 
\end{theorem}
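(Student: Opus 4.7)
The plan is to mirror the harmonic argument of the previous section, leveraging the fact that both $\widehat{A}^{-1}$ and $\mathcal{P}$ interact favorably with the positive semi-definite cone: the former by the Lyapunov (completely positive) representation, the latter by construction as a Kraus-type map. This cone structure is precisely what eliminates the real-versus-complex caveat of \eqref{eq:example}: destabilization by a CP perturbation must occur through a real eigenvalue crossing zero, along a PSD eigenvector, rather than through an imaginary-axis crossing requiring complex witness vectors.

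For the main equality $\mathcal{S}_{\mathrm{_{STR}}}^{\mathrm{w}}=\mathcal{S}_{\mathrm{_{DYN}}}^{\mathrm{w}}$, I would set $v=\|\widehat{A}^{-1}\|$ and use the Watrous fact already cited in the text to obtain PSD matrices $\Sigma^{\star},C^{\star}$ of unit Frobenius norm with $\widehat{A}C^{\star}=-\Sigma^{\star}/v$. Following the template of the harmonic argument, the CP perturbation $\mathcal{P}(X)=v^{-1}\langle C^{\star},X\rangle_{\mathrm F}\,\Sigma^{\star}$ is of Kraus form via the spectral decompositions of $C^{\star}$ and $\Sigma^{\star}$, has operator norm exactly $v^{-1}$, and satisfies $(\widehat{A}+\mathcal{P})C^{\star}=0$, yielding $\mathcal{S}_{\mathrm{_{STR}}}^{\mathrm{w}}\leq v^{-1}$ by the usual infimum argument. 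Conversely, for any destabilizing CP $\mathcal{P}$, rescaling along the family $t\mathcal{P}$ produces a critical $t^{\star}\leq 1$ with $\alpha(\widehat{A}+t^{\star}\mathcal{P})=0$; Perron--Frobenius applied to the cone-preserving composition $-\widehat{A}^{-1}(t^{\star}\mathcal{P})$ then yields a PSD eigenvector $C$ satisfying $(\widehat{A}+t^{\star}\mathcal{P})C=0$, and the PSD test matrix $\Sigma=t^{\star}\mathcal{P}(C)/\|t^{\star}\mathcal{P}(C)\|_{\mathrm F}$ witnesses $\|\widehat{A}^{-1}\|\geq 1/(t^{\star}\|\mathcal{P}\|)\geq 1/\|\mathcal{P}\|$, giving the converse.

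For the chain of inequalities, $\mathcal{S}_{\mathrm{_{STR}}}^{c}\leq\mathcal{S}_{\mathrm{_{STR}}}^{\Re(c)}$ is tautological and $\mathcal{S}_{\mathrm{_{STR}}}^{\Re(c)}\leq|\alpha|$ is realized by the real scalar shift $P=-\alpha(A)\mathbb{I}$ already mentioned after \eqref{eq:r_C}. The crux is $\mathcal{S}_{\mathrm{_{STR}}}^{\mathrm{w}}\leq 2\mathcal{S}_{\mathrm{_{STR}}}^{c}$, which by the previous paragraph and by \eqref{eq:formula_rC} is equivalent to $\|\widehat{A}^{-1}\|\geq\tfrac{1}{2}\sup_{\omega}\|(i\omega-A)^{-1}\|$. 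I would pick a resonant frequency $\omega^{\star}$ and complex unit vectors $u,w$ with $(i\omega^{\star}-A)w=u/v$ where $v=\sup_{\omega}\|(i\omega-A)^{-1}\|$, and test $\widehat{A}^{-1}$ against the real symmetric matrix $\Sigma=uw^{*}+wu^{*}+\bar u w^{\top}+\bar w u^{\top}$. A direct computation using $Aw=i\omega^{\star}w-u/v$ together with its conjugate $A\bar w=-i\omega^{\star}\bar w-\bar u/v$ shows $\widehat{A}^{-1}\Sigma=-v\,C$ with $C=ww^{*}+\bar w w^{\top}=2(w_{r}w_{r}^{\top}+w_{i}w_{i}^{\top})\succeq 0$ real, and the factor $2$ emerges from the elementary estimate $2\|C\|_{\mathrm F}\geq\|\Sigma\|_{\mathrm F}$.

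For equality in the normal case, if $A=U\Lambda U^{*}$ with $U$ unitary, then $\widehat{A}$ is diagonalized on the tensor basis $\{Ue_{i}e_{j}^{*}U^{*}\}$ with eigenvalues $\lambda_{i}+\bar\lambda_{j}$, giving $\|\widehat{A}^{-1}\|=1/\min_{i,j}|\lambda_{i}+\bar\lambda_{j}|=1/(2|\alpha|)$; combined with the standard normal-case identity $\mathcal{S}_{\mathrm{_{STR}}}^{c}=\mathcal{S}_{\mathrm{_{STR}}}^{\Re(c)}=|\alpha|$ this makes the chain an equality throughout. The main obstacle is obtaining the sharp constant $2$ in the norm bound $2\|C\|_{\mathrm F}\geq\|\Sigma\|_{\mathrm F}$: the symmetrization used to build a real $\Sigma$ from the complex resonance pair must respect the real structure of the lifted space tightly enough that $\|C\|_{\mathrm F}/\|\Sigma\|_{\mathrm F}$ saturates at $1/2$ in the normal case while remaining at least $1/2$ in general, which is the quantitative locus of the real-versus-complex discrepancy.
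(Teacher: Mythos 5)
Your handling of the equality $\mathcal{S}_{\mathrm{_{STR}}}^{\mathrm{w}}=\mathcal{S}_{\mathrm{_{DYN}}}^{\mathrm{w}}$ essentially mirrors the paper: the rank-one Kraus perturbation $\mathcal{P}=v^{-1}\langle C^{\star},\cdot\rangle\,\Sigma^{\star}$ is exactly the paper's construction, and your converse replaces the paper's Appendix~A argument (evolving $C_0=\mathbb{I}$ and ruling out a rotating dominant mode) by Perron--Frobenius applied to the cone-preserving map $-\widehat{A}^{-1}(t^{\star}\mathcal{P})$. That route can be made rigorous, but the one-line step ``criticality $\alpha(\widehat{A}+t^{\star}\mathcal{P})=0$ yields a PSD element of the kernel'' is precisely the nontrivial point (it needs the resolvent-domination/monotonicity facts for positive perturbations of a resolvent-positive generator, i.e.\ the content of the paper's Appendix~A); as stated you have relocated that difficulty rather than resolved it.

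The genuine gap is in the crux inequality $\mathcal{S}_{\mathrm{_{STR}}}^{\mathrm{w}}\leq 2\,\mathcal{S}_{\mathrm{_{STR}}}^{c}$. Your argument rests on the ``elementary estimate'' $2\Vert C\Vert_{\mathrm{F}}\geq\Vert\Sigma\Vert_{\mathrm{F}}$ for $C=\boldsymbol{w}\boldsymbol{w}^{*}+\bar{\boldsymbol{w}}\boldsymbol{w}^{\top}$ and $\Sigma=\boldsymbol{u}\boldsymbol{w}^{*}+\boldsymbol{w}\boldsymbol{u}^{*}+\bar{\boldsymbol{u}}\boldsymbol{w}^{\top}+\bar{\boldsymbol{w}}\boldsymbol{u}^{\top}$, and this is simply false for general unit vectors: taking $\boldsymbol{u}=\boldsymbol{e}_1$ and $\boldsymbol{w}=\tfrac{1}{2}(\sqrt{3}\,\boldsymbol{e}_1+i\boldsymbol{e}_2)$ gives $\Sigma=2\sqrt{3}\,\boldsymbol{e}_1\boldsymbol{e}_1^{\top}$ and $C=\tfrac{3}{2}\boldsymbol{e}_1\boldsymbol{e}_1^{\top}+\tfrac{1}{2}\boldsymbol{e}_2\boldsymbol{e}_2^{\top}$, so that $\Vert\Sigma\Vert_{\mathrm{F}}=2\sqrt{3}>\sqrt{10}=2\Vert C\Vert_{\mathrm{F}}$. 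Hence the factor $2$ does not come for free from the real symmetrization; you would have to prove that resonant singular pairs of a real matrix cannot realize such configurations (presumably using stationarity in $\omega$), which you do not do and which you yourself flag as the unresolved obstacle. The paper sidesteps the problem entirely: since $\widehat{A}$ is a real operator, its Frobenius-operator norm coincides with that of its complexification, so one may test with the complex rank-one matrix $X=\boldsymbol{w}\boldsymbol{w}^{*}$ directly. The duality lemma $\min_{\Vert X\Vert_{\mathrm{F}}=1}\Vert\widehat{A}X\Vert_{\mathrm{F}}=\Vert\widehat{A}^{-1}\Vert^{-1}$, the identity $\widehat{A}(\boldsymbol{w}\boldsymbol{w}^{*})=\left((A-i\omega)\boldsymbol{w}\right)\boldsymbol{w}^{*}+\boldsymbol{w}\left((A-i\omega)\boldsymbol{w}\right)^{*}$ (the $i\omega$ contributions cancel), and the triangle inequality at the resonant $\omega$ give $\mathcal{S}_{\mathrm{_{STR}}}^{\mathrm{w}}\leq 2\Vert(i\omega-A)\boldsymbol{w}\Vert=2\,\mathcal{S}_{\mathrm{_{STR}}}^{c}$ with no norm-comparison step at all. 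Your normal-case computation ($\Vert\widehat{A}^{-1}\Vert=1/(2|\alpha|)$) and the tail inequalities $\mathcal{S}_{\mathrm{_{STR}}}^{c}\leq\mathcal{S}_{\mathrm{_{STR}}}^{\Re(c)}\leq|\alpha|$ are correct and agree with the paper.
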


\noindent The example of Jacobian matrices~(\ref{eq:example}) is revisited in figure~\ref{fig:noise-destab}.  We see that the low stability with respect to constant imaginary perturbations detected by $\mathcal{S}_{\mathrm{_{STR}}}^c$ is also present when considering real stochastic fluctuations in the matrix coefficients, as predicted by $\mathcal{S}_{\mathrm{_{STR}}}^{\mathrm{w}}$.  Inequality~(\ref{eq:main-result}) is illustrated in figure~\ref{fig:test_conjecture} showing that, although associated to real perturbations,  $\mathcal{S}_{\mathrm{_{STR}}}^{\mathrm{w}}$ can sometimes be much smaller than its deterministic and complex counter-part  $\mathcal{S}_{\mathrm{_{STR}}}^c$.

\begin{figure}
\begin{center}
\includegraphics[width=0.9\columnwidth]{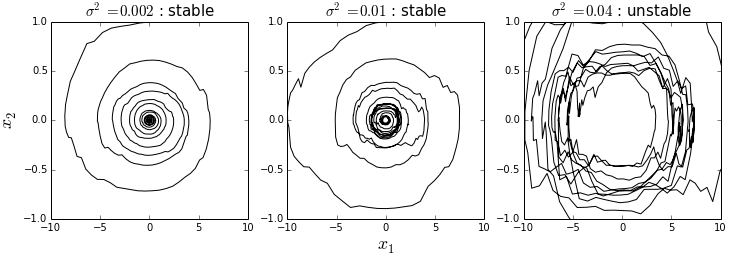}
\end{center}
\newsavebox{\smlmata}
\newsavebox{\smlmatb}
\savebox{\smlmata}{$A = \big( \begin{smallmatrix} -1 & 100 \\ -1 & -1 \end{smallmatrix} \big)$}
\savebox{\smlmatb}{$P = \big( \begin{smallmatrix} -0.07 & -0.27 \\ -0.92 & 0.37 \end{smallmatrix} \big)$}
\mbox{} \\[-24pt] \caption{
\label{fig:noise-destab}
\textbf{Stochastic destabilization by internal white-noise perturbation.}  The Jacobian matrix is \usebox{\smlmata}.  We have that $1/\Vert\widehat{A}^{-1}\Vert \simeq 0.04$, so that according to (\ref{eq:r_S_vs_V_S}) fluctuations $\mathcal{P}$ with intensity $\Vert\mathcal{P}\Vert \geq 0.04$ affecting the matrix $A$ can destabilize the equilibrium.  We show a realization of the process $d\boldsymbol{X}_{t} = (Adt+\sigma PdW_t)\boldsymbol{X}_{t}$ with \usebox{\smlmatb} and $\Vert P\Vert =1$. In the rightmost panel the variance $\sigma^{2} =0.04$ is large enough to show premises of destabilization.  Recall that for this matrix the real stability radius was independent of $M$, with  $\mathcal{S}_{\mathrm{_{STR}}}^{\Re(c)}(A) = 1$.
}
\end{figure}

\begin{figure}
\begin{center}
\includegraphics[width=0.6\columnwidth]{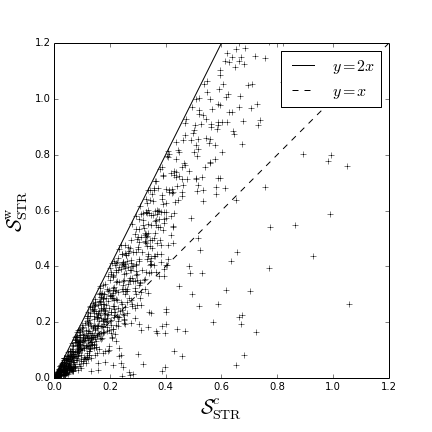} 
\end{center}
\mbox{} \\[-24pt] 
\caption{
\label{fig:test_conjecture}
\textbf{Illustration of the structural stability ordering (\ref{eq:main-result}).}  We randomly generated $1000$ stable $3\times3$ matrices.  Entries were independently drawn from a normal distribution of zero mean and unit variance, discarding unstable matrices.  We see that the stochastic structural stability of a matrix, $\mathcal{S}_{\mathrm{_{STR}}}^{\mathrm{w}}$, can be much smaller than the smallest constant complex change needed to destabilize that matrix, $\mathcal{S}_{\mathrm{_{STR}}}^c$.  The equality 
$\mathcal{S}_{\mathrm{_{STR}}}^{\mathrm{w}}=2\mathcal{S}_{\mathrm{_{STrL}}}^c$ is attained for normal matrices.
}
\end{figure}

\section{Proving the theorem} 

To prove our main result, we follow a reasoning similar to the one that led to the identity (\ref{eq:formula_rC}).  Beyond establishing the validity of our claim, the proof shows how to explicitly construct destabilizing internal perturbation from external perturbations. A construction that could be useful for applications.

\noindent Let us start by showing that $\mathcal{S}_{\mathrm{_{STR}}}^{\mathrm{w}} \leq \mathcal{S}_{\mathrm{_{DYN}}}^{\mathrm{w}}$.  For a stable 
Jacobian matrix $A$, suppose that $\mathcal{S}_{\mathrm{_{DYN}}}^{\mathrm{w}}(A) = 1/v$.  By (\ref{eq:stochastic_variability}) this means that there exists two positive normalized matrices, the noise correlation matrix $\Sigma$ and the associated system response correlation matrix $\Pi$, such that 
\[
 \widehat{A}^{-1}\Sigma = v^{-1}\Pi
 \ \Leftrightarrow\ 
 \widehat{A}\Pi+v^{-1}\Sigma = 0.
\]
As in the deterministic setting, using $\Sigma$ and $\Pi$, we will construct a destabilizing operator $\mathcal{P}$.  However, for this operator to represent real internal noise, it must be of the form $\sum_{k}P_{k} \cdot P_{k}^{\top}$ for a set of real matrices $P_{k}$.  To construct such an operator, we use the spectral decomposition of the positive semi-definite matrices $\Sigma$ and $\Pi$,
\[
 \Sigma = \sum_{i=1}^n \lambda_{i}\boldsymbol{u}_{i}\boldsymbol{u}_{i}^{\top}
 \quad\text{and}\quad
 \Pi = \sum_{i=1}^n \mu_{i}\boldsymbol{v}_{i}\boldsymbol{v}_{i}^{\top},
\]
and put $P_{k} = \sqrt{\frac{\lambda_{i}\mu_{j}}{v}}\boldsymbol{u}_{i}\boldsymbol{v}_{j}^{\top}$, representing $n^2$ independent internal perturbations of the matrix $A$.  We have
\begin{align*}
 \mathcal{P}(C)
 &= \sum_{k}P_{k} C P_{k}^{\top} \\[-5pt]
 &= v^{-1}\sum_{i=1}^n \lambda_{i}\boldsymbol{u}_{i}\boldsymbol{u}_{i}^{\top}
    \sum_{j=1}^n \mu_{j}\langle \boldsymbol{v}_{j},C \boldsymbol{v}_{j} \rangle \\
 &= v^{-1} \mbox{Tr}\left(\Pi C\right)\Sigma,
\end{align*}
and, using the Hilbert-Schmidt inner product $\left\langle X,Y\right\rangle=\mbox{Tr}\left(X^{*}Y\right)$ from which the Frobenius norm derives,  we see that $\mathcal{P}$ takes the compact form
\[
 \mathcal{P} = v^{-1}\left\langle \Pi,\,\cdot\,\right\rangle \Sigma,
\]
showing that $\mathcal{P}(\Pi) = v^{-1}\Sigma$ and $\left\Vert\mathcal{P}\right\Vert = v^{-1}$.  We thus have that
\[
 \big(\widehat{A}+\mathcal{P}\big) \Pi=0.
\]
Hence, $\mathcal{P}$ destabilizes the lifted dynamics and corresponds to real internal noise of intensity $||\mathcal{P}|| = v^{-1}$.  Thus, 
\[
 \mathcal{S}_{\mathrm{_{STR}}}^{\mathrm{w}}(A)
 \leq \mathcal{S}_{\mathrm{_{DYN}}}^{\mathrm{w}}(A).
\] 
Conversely, suppose that $\mathcal{S}_{\mathrm{_{STR}}}^{\mathrm{w}}(A) = p$.  There exists an operator $\mathcal{P}$ with $\Vert\mathcal{P}\Vert = p$ such that $\widehat{A}+\mathcal{P}$ is unstable, i.e., it has a dominant eigenvalue on the imaginary axis.  There can be strictly imaginary dominant eigenvalues, but we show in appendix~\ref{sect:appA} that there is also a dominant eigenvalue at zero.  Hence, for some matrix $X$ with $\Vert X\Vert_{\mathrm{F}} = 1$,
\[
 (\widehat{A}+\mathcal{P}) X = 0
 \ \Leftrightarrow\ 
 X = -\widehat{A}^{-1}(Y),
\]
with $Y = \mathcal{P}(X)$.  Since $\Vert Y\Vert_{\mathrm{F}} \leq p$ we find that $\Vert\widehat{A}^{-1}\Vert \geq p$, so that by virtue of (\ref{eq:V_S}),
\[
 \mathcal{S}_{\mathrm{_{STR}}}^{\mathrm{w}}(A)
 \geq \mathcal{S}_{\mathrm{_{DYN}}}^{\mathrm{w}}(A),
\]
which concludes the proof of (\ref{eq:r_S_vs_V_S}).  We refer to appendix~\ref{sect:appB} for the proof of (\ref{eq:main-result}).

\section{Epilogue} 

\noindent We demonstrated that, at least locally, structural and dynamical stability are remarkably connected concepts, so that the dynamical response to external perturbations can inform on systems sensitivity to structural changes. Measures of dynamical stability were constructed to mimic empirical approaches to estimate stability from time-series data \cite{arnoldi_resilience_2016}, thus revealing a fundamental link between pragmatic empirical views on stability and the more abstract concept of structural stability.     

\noindent We quantified dynamical stability via the maximal system response to external perturbations, and structural stability via the minimal destabilizing internal perturbation. 
However, it is not necessary to consider these \emph{worst-case scenarios} for a connection between these two stability notions to hold. Indeed,  to any external perturbation and associated system response, corresponds a destabilizing internal perturbation. The larger the amplification of the external perturbation, the smaller the intensity of the corresponding destabilizing internal perturbation. 

\noindent We mentioned above that the dynamics defined by the SDE (\ref{eq:homogeneous_SDE}) corresponds to It\^o's interpretation of Wiener processes \cite{kampen_stochastic_1997}.  Such stochastic signals can be seen as trains of delta peaks, occurring at random instants \cite{arnoldi_resilience_2016}[Appendix B].  In It\^o's prescription, the intensity of random pulses should be determined by the state variables before the pulse.  For instance, the pulse $x(t)\delta(t-t_k)$ arriving at time $t_k$ should be multiplied by $x(t_k^-)$.  On the other hand, Statonovich's prescription would be to multiply that pulse by $(x(t_k^+)+x(t_k^-))/2$.  Obviously the two prescriptions yield the same SDE when the noise is additive, hence in our case, the same definition of dynamical stability $\mathcal{S}_{\mathrm{_{DYN}}}^{\mathrm{w}}$.  A difference occurs, the \emph{spurious drift}, when the noise is multiplicative $-$in our case, when it affects the Jacobian matrix as in (\ref{eq:homogeneous_SDE}).  This yields a different definition of stochastic structural stability then $\mathcal{S}_{\mathrm{_{STR}}}^{\mathrm{w}}$. We leave it as an open problem to transpose the relashionship between dynamical and structural stability, when using Stratonovich's interpretation.  Choosing between the two prescriptions depends on the physical origin of the noise \cite{kampen_stochastic_1997, smythe_ito_1983}.  If the system is intrinsically stochastic, then It\^o's interpretation should be used.  If the noise is created by the application of a random force on an otherwise deterministic system, then Stratonovich's interpretation makes more sense. 

\noindent It has long been acknowledged that local stability analysis is not sufficient to fully grasp the stability of attractors.  For instance, the size of basins of attraction can be a fundamental global feature, independent of local stability \cite{holling_resilience_1973, menck_how_2013}.  Yet, linear stability theory has served as a fundamental reference point to move beyond local stability analysis.  It also allows to qualitatively depict the dynamical picture in the vicinity of equilibriums, without further knowledge of the global field.  The same can be said for structural stability.  In this paper we showed that the methodology used to quantify local dynamical stability also provides a measure of local structural stability.  This ought to serve as a benchmark to improve global, quantitative analysis of structural stability. 





\section*{acknowledgements}

For numerous discussions related to this article, the authors would like to thank Michel Loreau and Jos\'e Montoya from the CBTM and the  Ecological Networks and Global Change group. This work is supported by the TULIP Laboratory of Excellence (ANR-10-LABX-41), the AnaEE-France project (ANR-11-INBS-0001) and by the BIOSTASES Advanced Grant, funded by the European research council under the European Union's Horizon 2020 research and innovation programme  (grant agreement No 666971).



\bibliographystyle{ieeetr} 
\bibliography{bib_rspa.bib}

\newpage
\appendix

\section{Dominant eigenvalues of perturbed lifted operator} 
\label{sect:appA}

Recall that $\widehat{A}$ is the lifted operator defined from a stable real matrix $A$ and acting on any matrix $X$ as $\widehat{A}(X)=AX+XA^\top$, and that $\mathcal{P}$ is defined from an arbitrary sequence of real matrices $P_k$ as $\mathcal{P}(X)=\sum_k P_k X P_k^\top$.  Assume that the perturbed operator $\widehat{A}+\mathcal{P}$ lies on the boundary between stability and instability, that is,
\[
\alpha(\widehat{A}+\mathcal{P})=0.
\]
Here we show that any operator of the form 
\[
 \mathcal{A}_{\epsilon}=\widehat{A}+\epsilon \mathcal{P}
 \quad \text{with }\ 0\leq\epsilon<1,
\]
must have a real dominant eigenvalue $\lambda_\epsilon<0$, associated to an eigenvector $X_\epsilon$.  This would show, in particular, that $\lambda_\epsilon \rightarrow 0$ as $\epsilon$ goes to $1$, so that 
\[
 \mathcal{A}_{\epsilon}(X_\epsilon) \rightarrow (\widehat{A}+\mathcal{P}) X_1 = 0,
\]
an identity that was previously needed to prove that 
$\mathcal{S}_{\mathrm{_{STR}}}^{\mathrm{w}}(A)
\geq \mathcal{S}_{\mathrm{_{DYN}}}^{\mathrm{w}}(A)$. 

\noindent To show this, suppose the converse, that is: that the dominant eigenvalues of $\mathcal{A}_{\epsilon}$ all have non-zero imaginary parts.  An arbitrarily small perturbation can ensure that $\mathcal{A}_{\epsilon}$ has a unique dominant eigenvalue $i\omega_\epsilon+\alpha_\epsilon$ up to complex conjugacy, associated to an eigenvector $X_{\epsilon}$.  Suppose now that 
\[
 \left\langle \mathbb{I}, X_{\epsilon}\right\rangle= \text{Tr}X_\epsilon \neq 0, 
\]
which again can be insured by arbitrarily small perturbations.  The flow 
\[
 \dot{X}=\mathcal{A}_\epsilon(X),
\]
preserves the set of real positive matrices.  In particular, the starting point $C_{0}=\mathbb{I}$ becomes, as time flows forward: 
\[
 C_{t} = e^{\alpha_\epsilon t} \left\{ e^{i\omega_\epsilon t}
 \left\langle X_{\epsilon},\mathbb{I}\right\rangle X_{\epsilon}
 +\mbox{c.c.}+o(1)\right\}.
\]
Writing $Z_\epsilon=\left\langle X_{\epsilon},\mathbb{I}\right\rangle X_{\epsilon}$, we see that $C_t$ converges to
\[
 e^{\alpha_\epsilon t} \left\{\cos\left(\omega_\epsilon t\right)
 \Re(Z_\epsilon)-\sin(\omega_\epsilon t)\Im(Z_\epsilon)\right\},
\]
which rotates at frequency $\omega_{\epsilon}$.  It therefore cannot be positive for all $t$ which it should when the subdominant terms in $C_{t}$ become negligible.  We thus get a contradiction, hence $\lambda_\epsilon$ must be real. 

\noindent To summarize, we have shown that, modulo arbitrary small perturbations, the dominant eigenvalue of $\mathcal{A}_\epsilon$ is simple and real.  Because the spectrum depends continuously on the matrix entries \cite{kato_perturbation_1995}, this implies that amongst the dominant eigenvalues of $\mathcal{A}_\epsilon$ one was already real.

\newpage
\section{Ordering of structural stability measures} 
\label{sect:appB}

In the theorem  we claim that stochastic structural stability relates to the stability radius of matrices following the general inequality (illustrated in figure~\ref{fig:test_conjecture}):
\[
 \mathcal{S}_{\mathrm{_{STR}}}^{\mathrm{w}}
 \leq 2 \mathcal{S}_{\mathrm{_{STR}}}^c,
\]
with equality when the Jacobian matrix at the equilibrium is normal.  Here we prove this fact. Let us start by stating a lemma from linear algebra.

\begin{lemma}
For any invertible matrix $B$ acting on $\mathbb{C}^n$, it holds that:
\[
 \min_{x \in\mathbb{C}^n ;\; \Vert x \Vert = 1}
 \left\Vert B x \right\Vert
 = \Big( \max_{y \in\mathbb{C}^n ;\; \Vert y \Vert = 1}
 \big\Vert B^{-1} y \big\Vert \Big)^{-1}.
\]
\end{lemma}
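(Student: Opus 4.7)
The plan is to prove the identity by the change of variables $y=Bx$. Since $B$ is invertible it restricts to a bijection of $\mathbb{C}^n\setminus\{0\}$, and as $x$ ranges over the unit sphere, $y=Bx$ ranges over the ellipsoid $B(S^{n-1})=\{y:\|B^{-1}y\|=1\}$. Because $\|Bx\|=\|y\|$ under this substitution, the left-hand side of the identity rewrites as
\[
\min_{\|x\|=1}\|Bx\|=\min\big\{\|y\|\,:\,\|B^{-1}y\|=1\big\}.
\]

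Next I would exploit positive homogeneity of the Euclidean norm. Every nonzero $y$ can be written $y=t\tilde y$ with $t=\|y\|>0$ and $\|\tilde y\|=1$, and $B^{-1}\tilde y$ is nonzero by invertibility. The constraint $\|B^{-1}y\|=1$ then reads $t\,\|B^{-1}\tilde y\|=1$, forcing $t=1/\|B^{-1}\tilde y\|$. Minimising $\|y\|=t$ over admissible $y$ thus reduces to minimising $1/\|B^{-1}\tilde y\|$ over unit vectors $\tilde y$, which equals $\bigl(\max_{\|\tilde y\|=1}\|B^{-1}\tilde y\|\bigr)^{-1}$. Chaining the two equalities yields the claim.

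The argument is essentially a one-line substitution followed by the elementary observation that the minimum of the reciprocal of a positive function coincides with the reciprocal of its maximum, so there is no genuine obstacle. An equivalent route would invoke the singular value decomposition $B=U\Sigma V^{*}$ to identify both sides with the smallest singular value $\sigma_{\min}(B)$, but the substitution proof avoids that machinery entirely.
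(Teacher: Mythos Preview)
Your proof is correct. The paper's own argument is close in spirit but organized differently: it proves the identity as a pair of inequalities, each obtained by inserting a specific test vector. For the first it takes the maximizer $y$ of $\|B^{-1}y\|$, normalizes $x_*=B^{-1}y/\|B^{-1}y\|$, and evaluates $\|Bx_*\|$; for the second it reverses the roles, starting from the minimizer $x$ of $\|Bx\|$ and setting $y_*=Bx/\|Bx\|$. Your change-of-variables argument is essentially the same substitution $y=Bx$, but packaged as a single bijection plus homogeneity rather than two separate test-vector computations; this yields the equality in one pass and makes it transparent that no inequality ever needs to be strict. Both routes are elementary and avoid the singular value decomposition you mention as an alternative.
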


\begin{proof}
Take $x_*=B^{-1}y/\Vert B^{-1}y \Vert$ with $y$ normalized and realizing the maximum of $\Vert B^{-1} y \Vert$.  By construction, 
\[
 \min_{x \in\mathbb{C}^n ;\; \Vert x \Vert = 1} \Vert B x \Vert
 \leq \left\Vert B x_* \right\Vert 
 = \Big( \max_{y \in\mathbb{C}^n ;\; \Vert y \Vert = 1}
 \big\Vert B^{-1} y \big\Vert \Big)^{-1}.
\]
To show that taking the minimum over all normalized elements $x$ does not give anything smaller, it suffices to choose $y_*=Bx/\Vert Bx \Vert$ with $x$ normalized and realizing the minimum of $\Vert Bx \Vert$.  By construction,
\[
 \max_{y \in\mathbb{C}^n ;\; \Vert y \Vert = 1} \big\Vert B^{-1} y \big\Vert
 \geq \big\Vert B^{-1} y_* \big\Vert 
 = \Big( \min_{x \in\mathbb{C}^n ;\; \Vert x \Vert = 1} \Vert B x \Vert 
 \Big)^{-1},
\]
which is equivalent to
\[
 \min_{x \in\mathbb{C}^n ;\; \Vert x \Vert = 1} \Vert B x \Vert
 \geq \Big( \max_{y \in\mathbb{C}^n ;\; \Vert y \Vert = 1} \big\Vert B^{-1} y  
 \big\Vert \Big)^{-1},
\]
proving the lemma.
\end{proof}

\noindent With this result in hand, we can write, for any stable real matrix $A$,  
\[
 \mathcal{S}_{\mathrm{_{STR}}}^{\mathrm{w}}(A)
 = \big\Vert \widehat{A}^{-1} \big\Vert^{-1}
 = \min_{\Vert X \Vert _{\mathrm{F}}=1} \big\Vert \widehat{A}X 
 \big\Vert_{\mathrm{F}}.
\]
In particular, for any normalized matrix $X$, 
\[
 \mathcal{S}_{\mathrm{_{STR}}}^{\mathrm{w}}(A)
 \leq \big\Vert \widehat{A}X \big\Vert_{\text{F}}
 = \big\Vert AX+XA^{\top} \big\Vert_{\mathrm{F}}.
\]
Choosing $X$ as a rank-one orthonormal projector, $X=\boldsymbol{w}\boldsymbol{w}^{*}$, gives, for any real $\omega$:
\begin{align*}
 \mathcal{S}_{\mathrm{_{STR}}}^{\mathrm{w}}(A)
 &\leq \big\Vert \left(A\boldsymbol{w}\right)\boldsymbol{w}^{*}
 +\boldsymbol{w}\left(A\boldsymbol{w}\right)^{*} \big\Vert_{\mathrm{F}} \\
 &\leq \big\Vert 
 \left(\left(i\omega-A\right)\boldsymbol{w}\right)\boldsymbol{w}^{*}
 +\boldsymbol{w}\left(\left(i\omega-A\right)\boldsymbol{w}\right)^{*} 
 \big\Vert_{\mathrm{F}}.
\end{align*}
On the other hand we also have, using the above lemma, that
\[
 \mathcal{S}_{\mathrm{_{STR}}}^c(A)
 = \big\Vert (i\omega -A)^{-1} \big\Vert^{-1}
 = \inf_{\omega, ||\boldsymbol{w}|| = 1} \Vert (i\omega-A) \boldsymbol{w} \Vert.
\]
In the upper bound of $\mathcal{S}_{\mathrm{_{STrL}}}^{\mathrm{w}\mathfrak{noise}}(A)$, choosing $\omega$ to be the system's resonant frequency and $\boldsymbol{w}$ the associated minimizing vector of $\Vert (i\omega-A) \boldsymbol{w} \Vert$, and then invoking the triangular inequality, yields
\[
 \mathcal{S}_{\mathrm{_{STR}}}^{\mathrm{w}}(A)
 \leq 2 \mathcal{S}_{\mathrm{_{STrL}}}^c(A).
\]

\noindent Let us now show that equality holds whenever $A$ is normal.  First of all, for normal $A$, $\mathcal{S}_{\mathrm{_{STrL}}}^{\mathfrak{cst}}(A)$ coincides with the spectral abscissa $\alpha\left(A\right)$.  This a consequence of the following equality, valid for any normal matrix $A$ and complex number $z$ away from the spectrum of $A$ \cite{trefethen_spectra_2005}: 
\begin{equation}
 \big\Vert (z-A)^{-1} \big\Vert = 
 1/ \mbox{dist}(z,\mbox{spect}(A)).\label{Hausdorff_dist}
\end{equation}

where $\mbox{dist}(\cdot,\cdot)$ stands for the Hausdorff distance between subsets of the complex plane, equipped with the Euclidean metric. Indeed, taking $z=i\omega$, where $\omega$ is the imaginary part of the dominant eigenvalue of $A$,  gives $\mathcal{S}_{\mathrm{_{STrL}}}^{\mathfrak{cst}}(A)=\alpha(A)$.  Also, if $A$ is normal, $\widehat{A}$ is automatically normal as well.  Since $A$ is diagonalizable, we can express the spectrum of $\widehat{A}$ from the one of $A$.  Indeed, if $\{ (\lambda_{i},\boldsymbol{u}_{i}) \}_{i}$ are the complete eigenpairs of $A$, then
$\big\{ (\lambda_{i}+\bar{\lambda}_{j},\,\boldsymbol{u}_{i}\boldsymbol{u}_{j}^{*}) \big\}_{i,j}$
are the complete eigenpairs of $\widehat{A}$.  If $\lambda_{0}$ is the dominant eigenvalue of $A$, then by definition $-\Re\left(\lambda_{0}\right)=\alpha(A)$, and thus
$\big\{ -2\alpha(A), 2\lambda_{0}, 2\bar{\lambda}_{0} \big\}$
are dominant eigenvalues of $\widehat{A}$. Applying the above identity (\ref{Hausdorff_dist}) to the normal operator $\widehat{A}$, namely:
\[
 \big\Vert (z-\widehat{A})^{-1} \big\Vert = 
 1/ \mbox{dist}(z,\mbox{spect}(\widehat{A})),
\]
and taking $z=0$ gives $\Vert \widehat{A}^{-1} \Vert= 1/2\alpha(A)$, hence $\mathcal{S}_{\mathrm{_{STR}}}^{\mathrm{w}}(A) = 2 \alpha(A)= 2\mathcal{S}_{\mathrm{_{STR}}}^c(A)$, which is the expected equality.

\noindent Finally, since the real constant perturbation $P=\alpha(A)\mathbb{I}$ always sufficient to destabilize any stable matrix $A$, in light of the previous result, we see that for normal matrices 
\[
\mathcal{S}_{\mathrm{_{STR}}}^{\Re(c)}(A)=\mathcal{S}_{\mathrm{_{STR}}}^c(A)
\]
completing the the proof of the theorem.

\end{document}